
\documentclass[preprint,11pt]{elsarticle}
\usepackage{}
\usepackage{amsfonts}
\usepackage{amsmath}
\usepackage{amssymb}
\usepackage{latexsym}
\usepackage{graphicx}
\usepackage{ntheorem}
\usepackage{arydshln}
\usepackage[usenames]{color}



\newtheorem{theo}{Theorem}[section]
\newtheorem{theorem}[theo]{Theorem}

\newtheorem{lemma}[theo]{Lemma}
\newtheorem{definition}[theo]{Definition}

\newtheorem{remark}[theo]{Remark}
\theoremstyle{empty}
\theorembodyfont{\rmfamily}

\journal{Theoretical Computer Science}

\begin{document}

\begin{frontmatter}

\title{The normalized Laplacian spectrum of subdivisions of a graph}

\author[label01,label3]{Pinchen Xie}
\author[label02,label3]{Zhongzhi Zhang}
\ead{zhangzz@fudan.edu.cn}
\author[label04]{Francesc Comellas}
\ead{comellas@ma4.upc.edu}
\address[label01]{Department of Physics, Fudan
University, Shanghai 200433, China}
\address[label02]{School of Computer Science, Fudan
University, Shanghai 200433, China}
\address[label3]{Shanghai Key Laboratory of Intelligent Information
Processing, Fudan University, Shanghai 200433, China}
\address[label04]{Department of Applied Mathematics IV,
Universitat Polit\`ecnica de Catalunya, 08034 Barcelona Catalonia, Spain}


\begin{abstract}

Determining and analyzing the spectra of graphs is an important and exciting research topic in theoretical computer science.  The eigenvalues of the normalized Laplacian of a graph provide information on its structural properties and also on some relevant dynamical aspects, in particular those related to random walks.  In this paper, we give the spectra of the normalized Laplacian of iterated subdivisions of simple connected graphs. As an example of application of these results we find the exact values of their multiplicative degree-Kirchhoff index, Kemeny's constant and number of spanning trees.
\end{abstract}

\begin{keyword}
Normalized Laplacian spectrum \sep  Subdivision graph \sep Degree-Kirchhoff index \sep Kemeny's constant    \sep Spanning trees
\end{keyword}

\end{frontmatter}


\section{Introduction}

Spectral analysis of graphs has been the subject of considerable research effort in  theoretical computer science~\cite{DaHoMc04,Sp07,TrVuWa13}, due to its wide applications in this area  and in general~\cite{CvSi11,ArCvSiSk12}. In the last few decades a large body of scientific literature has established that important structural and dynamical properties of networked systems are encoded in the eigenvalues and eigenvectors of some matrices associated to their graph representations. The spectra of the adjacency, Laplacian and normalized Laplacian matrices of a graph provide information on bounds on the diameter, maximum and minimum degrees,  possible partitions, and can be used to count the number of paths of a given length,  number of triangles, total number of links, number of spanning trees and many more invariants. Dynamic aspects of a network, like its synchronizability and random walks properties can  be determined from the eigenvalues of the Laplacian and normalized Laplacian matrices which allow also the calculation of some interesting graph invariants like the Kirchhoff index~\cite{GoRo01,Ch97,BrHa12}.

We notice that in the last years there has been an increasing interest in the study of the normalized Laplacian  as many measures for random walks on a network are linked to the eigenvalues and eigenvectors of normalized Laplacian of the associated graph, including the hitting time, mixing time and Kemeny's constant which  can be used as a measure of efficiency of navigation on the network, see~\cite{Lo93,ZhShCh13,KeSn76, LeLo02}.

However, the normalized and standard Laplacian matrices of a network behave quite differently~\cite{ChDaHaLiPaSt04}, and even if the spectrum of one matrix can be determined this does not mean that the other can also be evaluated if the graphs are not regular.  As an example,  the eigenvalues of the Laplacian  of Vicsek fractals can be found analytically~\cite{BlJuKoFe03}, but until now it has not been possible to obtain the spectra of their normalized Laplacian. Thus, the spectra of the standard and normalized Laplacian matrices must be considered independently.

In this paper, we give the spectra of the normalized Laplacian of iterated subdivisions of simple connected graphs and we use these results to find the values of their multiplicative degree-Kirchhoff index, Kemeny's constant and number of spanning trees.



\section{Preliminaries}\label{pre}

Let $G(V,E)$ be any simple connected graph with vertex set $V$ and edge set $E$.
Let $N_0=|V|$ denote the number of vertices of $G$ and $E_0=|E|$ its number of edges.

\begin{definition} (Shirai~\cite{Sh99})
The subdivision graph of $G$, denoted by $s(G)$, is the graph obtained from $G$ by inserting an additional vertex to every edge of $G$.
\end{definition}
We denote $s^0(G)=G$. The $n$-th subdivision of $G$ is obtained through the iteration
$s^n(G)=s(s^{n-1}(G))$ and $N_n$ and $E_n$  denote the total number of vertices and edges of $s^n(G)$. Figure~\ref{Fig.1} illustrates the iterated subdivisions of four-vertex complete graph $K_4$.

From the definition of the subdivision graph, it is obvious that $E_n=2E_{n-1}$ and $N_n=N_{n-1}+E_{n-1}$. Thus, for $n>0$, we have
\begin{equation}\label{ordersizeSn}
\  E_n=2^{n}E_0, \quad\quad
\ N_n=N_0+(2^n-1)E_0.
\end{equation}
Moreover, for any vertex, once it is created, its degree remains unchanged as $n$ grows.
\begin{figure}
\centering
\includegraphics[width=0.8\textwidth]{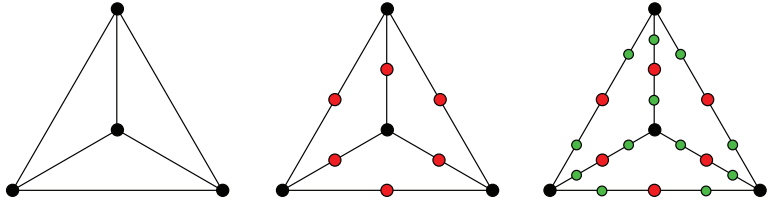}
\caption{$K_4$, $s(K_4)$ and $s^2(K_4)$. Black vertices denote the initial vertices of $K_4$ while red and green vertices are those introduced to obtain $s(K_4)$ and $s^2(K_4)$, respectively.}
\label{Fig.1}
\end{figure}
\begin{definition}
The circuit rank  or cyclomatic number of $G$ is the minimum number $r$ of edges that have to be removed from $G$ to convert the graph into a tree.
\end{definition}
Obviously, the circuit rank of $G$ is $r=E_0-N_0+1$.
\begin{lemma}
The circuit rank of $s^n(G)$  and $G$ are the same for $n\geqslant 0$.
\end{lemma}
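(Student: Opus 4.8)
The plan is to use the standard fact that for a connected graph $H$ the circuit rank equals $|E(H)|-|V(H)|+1$, together with the counting formulas already recorded in~\eqref{ordersizeSn}. So the first step is to observe that $s^n(G)$ is connected whenever $G$ is: inserting a vertex in the middle of an edge replaces a path of length $1$ by a path of length $2$ between the same endpoints, hence cannot disconnect the graph, and one iterates this. Consequently the circuit rank of $s^n(G)$ is exactly $E_n-N_n+1$.

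The second step is a direct substitution. Using $E_n=2^nE_0$ and $N_n=N_0+(2^n-1)E_0$ from~\eqref{ordersizeSn}, we get
\begin{equation*}
E_n-N_n+1=2^nE_0-\bigl(N_0+(2^n-1)E_0\bigr)+1=E_0-N_0+1=r,
\end{equation*}
which is precisely the circuit rank of $G$. Alternatively, and perhaps more transparently, one can argue by induction on $n$: passing from $s^{n-1}(G)$ to $s^n(G)$ adds one new vertex per edge and replaces each edge by two edges, i.e.\ it increases both the vertex count and the edge count by $E_{n-1}$, so the quantity $E-N$ is unchanged; combined with the preservation of connectedness, the circuit rank is unchanged at each step, hence equal to $r$ for all $n\geqslant 0$.

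There is essentially no hard part here; the only point requiring a word of justification is the invariance of connectedness under subdivision, and after that the result is an immediate consequence of the order and size formulas in~\eqref{ordersizeSn}. I would present the induction version in the body of the proof, since it makes the structural reason (\emph{subdividing an edge adds a vertex and an edge simultaneously}) explicit and does not even require the closed forms in~\eqref{ordersizeSn}.
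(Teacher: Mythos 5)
Your proof is correct and follows essentially the same route as the paper: apply the formula $r=E-N+1$ for connected graphs and substitute the counts $E_n=2^nE_0$ and $N_n=N_0+(2^n-1)E_0$ from~\eqref{ordersizeSn}. The only additions are your (welcome, but routine) remark that subdivision preserves connectedness and the equivalent induction phrasing; neither changes the substance of the argument.
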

\begin{proof}
From the definition of the subdivision graph and~(\ref{ordersizeSn}), the circuit rank of $s^n(G)$ is $r=E_n-N_n + 1= E_0-N_0+1$.
\end{proof}

Given the subdivision graph  $s^n(G)$ we label its nodes from $1$ to $N_n$.	
Let $d_i$ be the degree of vertex $i$, then $D_n = \mbox{diag}(d_1, d_2, \cdots, d_{N_n})$  denotes the diagonal degree matrix of $s^n(G)$  and $A_n$ its adjacency matrix, defined as a matrix with the $(i,j)$-entry  equal to $1$ if vertices $i$ and $j$ are adjacent and $0$ otherwise.
	
We introduce now the probability transition matrix for random walks on  $s^n(G)$ or Markov matrix as $M_n=D_n^{-1}A_n$. $M_n$ can be normalized to obtain a symmetric matrix $P_n$.
\begin{equation}
P_n=D_n^{-\frac{1}{2}} A_n D_n^{-\frac{1}{2}}=D_n ^{\frac{1}{2}}M_n D_n^{-\frac{1}{2}}\,.
\end{equation}
The $(i,j)$th entry of $P_n$ is $(P_n)_{ij}=\frac{A_n(i,j)}{\sqrt{d_id_j}}$.

\begin{definition}
The normalized Laplacian matrix of $s^n(G)$ is
\begin{equation}
{\cal L}_n=I-D_n ^{\frac{1}{2}}M_n D_n^{-\frac{1}{2}}=I-P_n,
\end{equation}
where $I$ is the identity matrix with the same order as $P_n$.
\end{definition}

We denote the spectrum of ${\cal L}_n$ by $\sigma_n=\left\{\lambda^{(n)}_1,\lambda^{(n)}_2,\cdots,\lambda^{(n)}_{N_n}\right\}$.
It is known that $0=\lambda_1^{(n)}<\lambda_2^{(n)}\leqslant\cdots\leqslant\lambda_{N_n-1}^{(n)}\leqslant\lambda_{N_n}^{(n)}\leqslant2$. The spectrum of  the normalized Laplacian matrix of a graph  provides us with relevant structural information about the graph, see~\cite{Ch97,Bu16}.

Below, we will then relate $\sigma_n$ to some significant invariants of $s^n(G)$.

\begin{definition}
Replacing each edge of a simple connected graph $G$ by a unit resistor, we obtain an electrical network $G^*$ corresponding to $G$.	The resistance distance $r_{ij}$ between  vertices $i$ and $j$ of $G$ is equal to the effective resistance between the two equivalent vertices of $G^*$~\cite{KlRa93}.
\end{definition}

\begin{definition}
The multiplicative degree-Kirchhoff index of $G$ is defined as~\cite{ChZh07}:
\begin{equation}
{K\! f}^*(G)=\sum_{i<j}d_id_jr_{ij}.
\end{equation}
\end{definition}
This index is different from the classical Kirchhoff index~\cite{BoBaLiKl94}, ${K\! f}(G)=\sum_{i<j}r_{ij}$, as it takes into account the degree distribution of the graph. 

It has been proved~\cite{ChZh07} that ${K\! f}^*(G)$ can be obtained from the spectrum
$\sigma_0=\{\lambda_1,\lambda_2,\cdots,\lambda_{N_0}\}$ of the normalized Laplacian matrix ${\cal L}_0$ of $G$:
\begin{equation}
	{K\! f}^*(G)=2E_0\sum_{k=2}^{N_0}\frac{1}{\lambda_k},
\end{equation}
where $0=\lambda_1<\lambda_2\leqslant\cdots\leqslant\lambda_{N_0}\leqslant 2$.

Thus,  for $n\geqslant 0$, we have:
\begin{equation}\label{kirchhoff}
	{K\! f}^*(s^n(G))=2E_n\sum_{k=2}^{N_n}\frac{1}{\lambda^{(n)}_k},
\end{equation}
where $0=\lambda_1^{(n)}<\lambda_2^{(n)}\leqslant\cdots\leqslant\lambda_{N_n-1}^{(n)}\leqslant\lambda_{N_n}^{(n)}\leqslant2$ are the eigenvalues of ${\cal L}_n$.

\begin{definition}
	Given a graph $G$, the Kemeny's constant $K(G)$, also known as average hitting time, is the expected number of steps required for the transition from a starting vertex $i$ to a destination vertex, which is chosen randomly according to a stationary distribution of  unbiased random walks on $G$, see~\cite{Hu14} for more details.
\end{definition}


It is known that  $K(G)$ is a constant as it is independent of the selection of starting vertex $i$, see~\cite{LeLo02}. Moreover,  the Kemeny's constant can be computed from  the normalized Laplacian spectrum  in a very simple way as the sum of all reciprocal eigenvalues, except $1/\lambda_1$, see~\cite{Bu16}.
Thus, we can write, for $s^n(G)$ and $\sigma_n$:
\begin{equation}\label{kem}
	K(s^n(G))=\sum_{k=2}^{N_n}\frac{1}{\lambda^{(n)}_k}.
\end{equation}

The last graph invariant considered in this paper is the number of spanning trees of a graph $G$.
A spanning tree is a subgraph of $G$ that includes all the vertices of $G$ and is a tree.
A known result from Chung~\cite{Ch97} allows the calculation of this number from the normalized Laplacian spectrum  and the degrees of all the vertices, thus the number of spanning trees $N_{\rm st}^{(n)}$ of $s^n(G)$ is
\begin{equation}\label{ST01}
N_{\rm st}^{(n)}=\frac{\displaystyle \prod_{i=1}^{N_{n}}
d_i\prod_{i=2}^{N_n}\lambda_i^{(n)}}{\displaystyle \sum_{i=1}^{N_{n}}d_i}\,.
\end{equation}
In the next section we provide an analytical expression for this invariant for any value of $n\geqslant 0$.


\section{Normalized Laplacian spectrum of the subdivision graph $s^n(G)$}

In this section we  find an  analytical expression for the spectrum $\sigma_n$ of ${\cal L}_n(s^n(G))$, the normalized  Laplacian  of the subdivision graph $s^n(G)$. We show that this spectrum can be  obtained iteratively from the spectrum of $G$.
As  ${\cal L}_n=I-P_n$, if $\mu$ is an eigenvalue of $P_n$ then $1-\mu$ is an eigenvalue of ${\cal L}_n$ with the same multiplicity. We denote the multiplicity of $\mu$ as $m_{P_n}(\mu)$. Thus we calculate  first the spectrum $\sigma'_n$ of $P_n$.

\begin{lemma}\label{eig1}
Let $\mu$ be any nonzero eigenvalue of $P_n$ and  let ${R}(x)=2x^2-1$. Then, ${R}(\mu)$ is an eigenvalue of  $P_{n-1}$ with the same multiplicity as $\mu$.
\end{lemma}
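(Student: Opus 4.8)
The plan is to exploit the bipartite-like block structure that subdivision induces on the adjacency matrix. Recall that $s^n(G)=s(s^{n-1}(G))$, so the vertex set of $s^n(G)$ splits into the ``old'' vertices $O$ (those of $s^{n-1}(G)$) and the ``new'' vertices $W$ (one per edge of $s^{n-1}(G)$). Every new vertex has degree $2$, and every edge of $s^n(G)$ joins an old vertex to a new one. Writing the adjacency matrix of $s^n(G)$ in the block form corresponding to $O\cup W$, the diagonal blocks vanish and the off-diagonal block is the incidence-type matrix $B$ of $s^{n-1}(G)$, with $B B^{\mathsf T}=A_{n-1}+D_{n-1}$ (where $A_{n-1},D_{n-1}$ are the adjacency and degree matrices of $s^{n-1}(G)$) and $B^{\mathsf T}B = 2I$ on the new-vertex side. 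Conjugating by $D_n^{-1/2}$, and using that the new vertices all have degree $2$, one finds that $P_n$ has the block form $\begin{pmatrix}0 & C\\ C^{\mathsf T} & 0\end{pmatrix}$ with $C=\tfrac{1}{\sqrt2}\,D_{n-1}^{-1/2}B$ and $C C^{\mathsf T}=\tfrac12 D_{n-1}^{-1/2}(A_{n-1}+D_{n-1})D_{n-1}^{-1/2}=\tfrac12(P_{n-1}+I)$.

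Next I would relate the spectrum of $P_n$ to that of $CC^{\mathsf T}$. For a symmetric block matrix of the form $\begin{pmatrix}0 & C\\ C^{\mathsf T}& 0\end{pmatrix}$, if $v$ is an eigenvector of $CC^{\mathsf T}$ with eigenvalue $t>0$, then $(\,\sqrt{t}\,v^{\mathsf T},\ (C^{\mathsf T}v)^{\mathsf T})^{\mathsf T}$ and $(\,\sqrt{t}\,v^{\mathsf T},\ -(C^{\mathsf T}v)^{\mathsf T})^{\mathsf T}$ are eigenvectors of $P_n$ with eigenvalues $\pm\sqrt{t}$; moreover these constructions are injective and account for all nonzero eigenvalues of $P_n$. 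Hence $\mu\neq 0$ is an eigenvalue of $P_n$ if and only if $\mu^2$ is a nonzero eigenvalue of $CC^{\mathsf T}=\tfrac12(P_{n-1}+I)$, with the multiplicity of $\mu^2$ as an eigenvalue of $CC^{\mathsf T}$ equal to $m_{P_n}(\mu)+m_{P_n}(-\mu)$. Now $\mu^2$ being an eigenvalue of $\tfrac12(P_{n-1}+I)$ means $2\mu^2-1=R(\mu)$ is an eigenvalue of $P_{n-1}$, and the multiplicities match up: $m_{P_{n-1}}(R(\mu)) = m_{P_n}(\mu)+m_{P_n}(-\mu)$.

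The only gap left is to upgrade the identity $m_{P_{n-1}}(R(\mu)) = m_{P_n}(\mu)+m_{P_n}(-\mu)$ to the claimed $m_{P_{n-1}}(R(\mu))=m_{P_n}(\mu)$, i.e. to show that $-\mu$ contributes nothing, equivalently that $P_n$ and $P_{n-1}$ cannot both have $\mu$ produce a symmetric partner. I expect this to be the main subtlety, and it should follow from the fact that $\tfrac12(P_{n-1}+I)=CC^{\mathsf T}$ is obtained by restricting $CC^{\mathsf T}$ (an $N_{n-1}\times N_{n-1}$ matrix) versus $C^{\mathsf T}C=I$ on the larger side: the point is that when $\mu$ is a \emph{nonzero} eigenvalue of $P_n$, the vector on the new-vertex side $C^{\mathsf T}v$ is nonzero, so the $+\mu$ and $-\mu$ eigenvectors are genuinely distinct, but the statement of the lemma only claims equality of multiplicities after noting that each nonzero $\mu$ of $P_n$ pairs with $-\mu$ of $P_n$ (since $P_n$'s spectrum is symmetric about $0$, $s^n(G)$ being bipartite for $n\geq 1$), so $m_{P_n}(\mu)=m_{P_n}(-\mu)$ and one reads off $m_{P_{n-1}}(R(\mu))=2\,m_{P_n}(\mu)$ — hence one should double-check the normalization in the lemma's claim; in any case the substantive content is the spectral correspondence $R(\mu)\in\sigma(P_{n-1})\iff \mu\in\sigma(P_n)\setminus\{0\}$, which the block computation above delivers directly. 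I would finish by writing out the eigenvector bookkeeping carefully to pin down the exact multiplicity statement as phrased.
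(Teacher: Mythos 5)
Your block decomposition is sound and is a genuinely different (and arguably cleaner) route than the paper's, which works componentwise with the eigenvector equations at old and new vertices and eliminates the new--vertex coordinates by hand. The identity $CC^{\mathsf T}=\tfrac12(P_{n-1}+I)$ is correct and immediately explains where $R(\mu)=2\mu^2-1$ comes from. (A minor slip: $B^{\mathsf T}B$ is not $2I$ but $2I$ plus the adjacency matrix of the line graph of $s^{n-1}(G)$; you never use this claim, though --- all you actually need is that the new--vertex degree matrix is $2I$.)

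The genuine error is in the multiplicity bookkeeping, and it matters because it leads you to $m_{P_{n-1}}(R(\mu))=2\,m_{P_n}(\mu)$ and from there to doubting the lemma's normalization. For $M=\begin{pmatrix}0&C\\ C^{\mathsf T}&0\end{pmatrix}$ and $\mu\neq0$, the projection $(v,u)\mapsto v$ maps the $\mu$-eigenspace of $M$ injectively \emph{onto} the $\mu^{2}$-eigenspace of $CC^{\mathsf T}$: injective because $C^{\mathsf T}v=\mu u$ forces $u=\mu^{-1}C^{\mathsf T}v$, so $v=0$ implies $u=0$; onto because from any $v$ with $CC^{\mathsf T}v=\mu^{2}v$ you build the eigenvector $(v,\mu^{-1}C^{\mathsf T}v)$. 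The $(-\mu)$-eigenspace projects onto the \emph{same} subspace, not a complementary one, so the two eigenspaces do not sum to give $m_{CC^{\mathsf T}}(\mu^{2})$. The correct relations are $m_{P_n}(\mu)=m_{P_n}(-\mu)=m_{CC^{\mathsf T}}(\mu^{2})=m_{P_{n-1}}(R(\mu))$, which is exactly the lemma. A sanity check: for $G=K_2$, $s(G)=P_3$ has $P_1$-spectrum $\{1,0,-1\}$ while $K_2$ has $P_0$-spectrum $\{1,-1\}$, and indeed $m_{P_0}(R(1))=m_{P_0}(1)=1=m_{P_1}(1)$, not $2$. With this correction your argument closes and yields the statement as phrased; there is no normalization issue in the lemma.
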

\begin{proof}
Divide the vertices of $s^n(G)$ into two groups ${V}_{\rm old}^n$ and ${V}_{\rm new}^n$, where ${V}_{\rm new}^n$ contains all the vertices created when the edges of $s^{n-1}(G)$ are subdivided to generate $s^n(G)$ and  ${V}_{\rm old}^n$ contains the rest. Obviously,  ${V}_{\rm old}^n$ has the same vertices as $s^{n-1}(G)$. Thus for convenience, in the following when any vertex of ${V}_{\rm old}^n$ is considered, it also refers to the corresponding vertex of $s^{n-1}(G)$.

Let $\psi=(\psi_{1},\psi_{2},\ldots,\psi_{N_n})^\top$ be any eigenvector associated to an eigenvalue $\mu$ of $P_n$. Hence
\begin{equation}\label{F1}
P_n\psi=\mu\psi
\end{equation}
Consider a vertex $i\in{V}_{\rm old}^n$ of $s^n(G)$ and denote $\mathcal{N}$ the set of all its neighbors in $s^n(G)$ and $\mathcal{N}'$  the set of all its neighbors in $s^{n-1}(G)$.  From the definition of subdivision graph, there exists a bijection between $\mathcal{N}$ and $\mathcal{N}'$.
If we rewrite Eq.~\eqref{F1} as $\sum_{j=1}^{N_n}(P_n)_{ij}\psi_j=\mu\psi_i,$
we have
\begin{equation}\label{F2}
\mu\psi_i=\sum_{j\in\mathcal{N}}\frac{1}{\sqrt{d_id_j}}\psi_j=\sum_{j\in\mathcal{N}}\frac{1}{\sqrt{2d_i}}\psi_j.
\end{equation}
For any vertex  $ j\in\mathcal{N}$, we have a similar relation
\begin{equation}\label{F3}
\mu\psi_j=\sum_{k=1}^{N_n}(P_n)_{jk}\psi_k=\frac{1}{\sqrt{2d_i}}\psi_i+\frac{1}{\sqrt{2d_{j'}}}\psi_{j'}.
\end{equation}
where vertex $ j'\in\mathcal{N}'$ is the other neighbor of vertex $ j$ in $s^n(G)$ .
Combining Eq.~\eqref{F2} and Eq.~\eqref{F3} yields
\begin{equation}\label{F4}
\begin{split}
\mu\psi_i &=\frac{1}{\mu\sqrt{2d_i}}\times\sum_{ j'\in\mathcal{N}'}\left(\frac{1}{\sqrt{2d_i}}\psi_i+\frac{1}{\sqrt{2d_{j'}}}\psi_{j'}\right) \\
&=\frac{1}{\mu\sqrt{2d_i}}\times\left(\frac{d_i}{\sqrt{2d_i}}\psi_i+\sum_{j'\in\mathcal{N}'}\frac{1}{\sqrt{2d_{j'}}}\psi_{j'}\right).
\end{split}
\end{equation}
Therefore,
\begin{equation}\label{F5}
(2\mu^2-1)\psi_i=\sum_{j'\in\mathcal{N}'}\frac{1}{\sqrt{d_id_{j'}}}\psi_{j'}.
\end{equation}
Eq.~\eqref{F5} directly reflects that ${R}(\mu)=(2\mu^2-1)$ is an eigenvalue of $P_{n-1}$, $\psi_o=(\psi_i)_{i\in {V}_{\rm old}^n}^\top$ is one of its corresponding eigenvectors and $\psi$ can be totally determined by $\psi_o$ by using  Eq.~\eqref{F3}. Hence $m_{P_{n-1}}(R(\mu)) \geqslant  m_{P_n}(\mu)$.

Suppose now that  $m_{P_{n-1}}(R(\mu))> m_{P_n}(\mu)$. Then there exists an extra eigenvector $\psi_e$ associated to ${R}(\mu)$ without an associated eigenvector in $P_n$. But Eq.~\eqref{F3} provides $\psi_e$ with its corresponding eigenvector in $P_n$ as $\mu$ is nonzero, in contradiction with our assumption. Thus  $m_{P_{n-1}}(R(\mu))= m_{P_n}(\mu)$ and the proof is completed.
\end{proof}
\\

\begin{lemma}\label{eig2}
Let $\mu$ be any eigenvalue of $P_{n-1}$ such that $\mu\neq -1$ and let $f_1(x)=\sqrt{\frac{x+1}{2}}$ and $f_2(x)=-\sqrt{\frac{x+1}{2}}$. Then $f_1(\mu)$ and $f_2(\mu)$ are eigenvalues of $P_n$. Besides,
$ m_{P_n}(f_1(\mu))= m_{P_n}(f_2(\mu))=m_{P_{n-1}}(\mu)$.
\end{lemma}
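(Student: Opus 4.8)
The plan is to reverse the construction used in the proof of Lemma~\ref{eig1}. Fix an eigenvalue $\mu\neq-1$ of $P_{n-1}$ together with an associated eigenvector $\phi=(\phi_i)_{i\in V_{\rm old}^n}$, where as in the previous proof we identify the vertices of $s^{n-1}(G)$ with the subset $V_{\rm old}^n$ of the vertices of $s^n(G)$. Since $P_{n-1}$ is symmetric and ${\cal L}_{n-1}=I-P_{n-1}$ has its spectrum in $[0,2]$, we have $\mu\in[-1,1]$, so $\nu:=f_1(\mu)=\sqrt{(\mu+1)/2}$ is a well-defined real number, $f_2(\mu)=-\nu$, and $\nu\neq0$ precisely because $\mu\neq-1$ — this is exactly where the hypothesis is used, since we will need to divide by the target eigenvalue. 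Note also that $\nu$ and $-\nu$ are the two roots of $R(x)=\mu$, i.e. $2\nu^2-1=\mu$.

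For each sign choice $\varepsilon\in\{+1,-1\}$ I would define a vector $\psi=\psi^{(\varepsilon)}$ on the vertices of $s^n(G)$ by setting $\psi_i=\phi_i$ for $i\in V_{\rm old}^n$ and, for a vertex $j\in V_{\rm new}^n$ lying on the edge of $s^{n-1}(G)$ that joins $i,i'\in V_{\rm old}^n$,
\[
\psi_j=\frac{1}{\varepsilon\nu}\left(\frac{1}{\sqrt{2d_i}}\,\psi_i+\frac{1}{\sqrt{2d_{i'}}}\,\psi_{i'}\right),
\]
in analogy with Eq.~\eqref{F3}. With this definition the eigenvalue equation $(P_n\psi)_j=\varepsilon\nu\,\psi_j$ holds automatically at every new vertex $j$, whose only neighbours are $i$ and $i'$ and whose degree is $2$. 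The substantive check is at an old vertex $i$: expanding $(P_n\psi)_i=\sum_{j\in\mathcal{N}}\frac{1}{\sqrt{2d_i}}\psi_j$, inserting the definition of each $\psi_j$, and using that $i$ has exactly $d_i$ neighbours in $s^n(G)$ (with the bijection between $\mathcal{N}$ and $\mathcal{N}'$), the terms collect into $\frac{1}{2\varepsilon\nu}(\psi_i+\sum_{i'\in\mathcal{N}'}\frac{1}{\sqrt{d_id_{i'}}}\psi_{i'})$; the inner sum equals $\mu\psi_i$ because $\phi$ is a $\mu$-eigenvector of $P_{n-1}$, and then $\frac{1+\mu}{2\varepsilon\nu}=\frac{2\nu^2}{2\varepsilon\nu}=\varepsilon\nu$ (using $\varepsilon^2=1$). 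Hence $(P_n\psi)_i=\varepsilon\nu\,\psi_i$, so $\psi^{(+1)}$ and $\psi^{(-1)}$ are eigenvectors of $P_n$ for $f_1(\mu)$ and $f_2(\mu)$ respectively.

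The map $\phi\mapsto\psi^{(\varepsilon)}$ is linear and injective, since its restriction to $V_{\rm old}^n$ recovers $\phi$; therefore $m_{P_n}(f_1(\mu))\geqslant m_{P_{n-1}}(\mu)$ and $m_{P_n}(f_2(\mu))\geqslant m_{P_{n-1}}(\mu)$, and in particular $f_1(\mu)$ and $f_2(\mu)$ are genuine eigenvalues of $P_n$. For the reverse inequalities I would invoke Lemma~\ref{eig1}: since $f_1(\mu)\neq0$ and $R(f_1(\mu))=\mu$, that lemma gives $m_{P_n}(f_1(\mu))=m_{P_{n-1}}(R(f_1(\mu)))=m_{P_{n-1}}(\mu)$, and identically for $f_2(\mu)$. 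This yields the claimed equalities and finishes the proof.

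I expect the only real difficulty to be bookkeeping rather than ideas: one must keep track that every new vertex has degree $2$ so the $\sqrt{2d_i}$ factors are correct, that an old vertex $i$ keeps the same degree $d_i$ in $s^n(G)$ as in $s^{n-1}(G)$, and that $f_1(\mu)\neq f_2(\mu)$ whenever $\mu\neq-1$ so that the two families of eigenvectors sit over distinct eigenvalues and the multiplicity statements do not interfere. The single algebraic identity $2\nu^2-1=\mu$ drives everything and mirrors the relation $R(x)=2x^2-1$ from Lemma~\ref{eig1}.
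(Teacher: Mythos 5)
Your proposal is correct and follows essentially the same route as the paper: the paper dismisses this lemma as a direct consequence of Lemma~\ref{eig1}, whose proof already contains (implicitly, in the surjectivity step via Eq.~\eqref{F3}) exactly the lifting construction you carry out explicitly. Your version merely spells out the verification at old and new vertices and then correctly closes the multiplicity count by applying Lemma~\ref{eig1} to the nonzero eigenvalue $f_{1,2}(\mu)$, so there is no gap.
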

\begin{proof}
This lemma is a direct consequence of Lemma~\ref{eig1}
\end{proof}
\\

\begin{remark}\label{remark1}
Lemmas~\ref{eig1} and~\ref{eig2} indicate that any nonzero eigenvalues of $P_n$ can be obtained from the spectrum  of $P_{n-1}$. Due to the simple expression of $R$, $f_1$ and $f_2$, we easily find that each eigenvalue, except $-1$, of $P_{n-1}$ will generate two unique eigenvalues of $P_n$. Thus $2(N_{n-1}-m_{P_{n-1}}(-1))$ eigenvalues of $\sigma'_n$, and consequently  of  $\sigma_n$, are determined  this way.
\end{remark}

The Perron-Frobenius theorem~\cite{friedland2013perron}  shows that the largest absolute value of the eigenvalues of $P_n$  is always 1. And because of the existence of a unique stationary distribution  for random walks on $s^n(G)$, the multiplicity of the eigenvalue $1$ is always $1$ for any $n\geqslant 0$.  Since $f_2(1)=-1$, we also obtain $m_{P_n}(-1)=1$ for any $n\geqslant 1$. This can be  further explained from the perspective of Markov chains. Since $s^n(G)$ is a bipartite graph~\cite{asratian1998bipartite} containing no odd-length cycles for any $n>0$,  random walks on it are periodic with period $2$, which means it takes an even number of steps to return to the starting vertex. Thus the smallest eigenvalue of the Markov matrix of $s^n(G)$ is $-1$~\cite{zhang2004smallest}. But  random walks on a general graph $G$ can be aperiodic~\cite{diaconis1991geometric} if the graph has an odd-length cycle. Hence the multiplicity of the  eigenvalue $-1$ of $P_0$ depends on the structure of $G$~\cite{zhang2004smallest}.

Lemmas~\ref{eig1} and~\ref{eig2} allow us to obtain the transition between the  eigenvalues of the normalized Laplacian of $s^n(G)$ at each iteration step.
If  $\mu$  is an eigenvalue of  $P_n$ then $\lambda = 1 -\mu$ is an eigenvalue of  ${\cal L}_{n}$. Let $Q(x)=4x-2x^2$, by Lemma~\ref{eig1},  $Q(\lambda)=1-R(1-\lambda)$ is an eigenvalue of  ${\cal L}_{n-1}$ if $\lambda\neq 1$. This allows us to state the following lemma:

\begin{lemma}\label{eigLiter}
Let $\lambda$ be any eigenvalue of ${\cal L}_{n-1}$  such that $\lambda\neq 2$ and let $g_1(x)=1+\sqrt{1-\frac{x}{2}}$ and $g_2(x)=1-\sqrt{1-\frac{x}{2}}$ . Then $g_1(\lambda)$ and $g_2(\lambda)$ are eigenvalues of ${\cal L}_{n}$  and $ m_{{\cal L}_{n}}(g_1(\lambda))= m_{{\cal L}_{n}}(g_2(\lambda))=m_{{\cal L}_{n-1}}(\lambda)$.
\end{lemma}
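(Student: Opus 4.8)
The plan is to transport Lemma~\ref{eig2} from the matrices $P_k$ to the normalized Laplacians ${\cal L}_k$ by means of the affine relation ${\cal L}_k=I-P_k$. The point is that this relation induces a multiplicity-preserving bijection between the spectra: $\lambda$ is an eigenvalue of ${\cal L}_{n-1}$ with multiplicity $m$ and eigenspace $W$ if and only if $\mu:=1-\lambda$ is an eigenvalue of $P_{n-1}$ with the same multiplicity $m$ and the same eigenspace $W$. Under this dictionary the hypothesis $\lambda\neq 2$ is exactly the condition $\mu\neq -1$ required by Lemma~\ref{eig2}, so no extra work is needed to match the hypotheses.

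Concretely, I would set $\mu=1-\lambda$ and apply Lemma~\ref{eig2}: since $\mu\neq -1$, the numbers $f_1(\mu)=\sqrt{(\mu+1)/2}$ and $f_2(\mu)=-\sqrt{(\mu+1)/2}$ are eigenvalues of $P_n$, each with multiplicity $m_{P_{n-1}}(\mu)$. Applying $I-(\cdot)$, the numbers $1-f_1(\mu)$ and $1-f_2(\mu)$ are eigenvalues of ${\cal L}_n=I-P_n$ with multiplicity $m_{P_{n-1}}(\mu)=m_{{\cal L}_{n-1}}(\lambda)$. Finally a one-line computation identifies these with $g_1,g_2$: since $(\mu+1)/2=(2-\lambda)/2=1-\lambda/2\geqslant 0$ (using $\lambda\leqslant 2$), one gets $1-f_1(\mu)=1-\sqrt{1-\lambda/2}=g_2(\lambda)$ and $1-f_2(\mu)=1+\sqrt{1-\lambda/2}=g_1(\lambda)$.

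The only thing requiring a little care is that the multiplicity statements are exact equalities rather than mere lower bounds, i.e.\ that the two produced eigenvalues are genuinely distinct and that no other eigenvalue of ${\cal L}_{n-1}$ maps onto them. For this I would observe that $g_1$ maps $[0,2)$ injectively into $(1,2]$ while $g_2$ maps $[0,2)$ injectively into $[0,1)$, so the two ranges are disjoint and each $g_i$ is one-to-one. Hence $g_1(\lambda)$ (resp.\ $g_2(\lambda)$) can arise in $\sigma_n$ only from the single eigenvalue $\lambda$ through the correspondence of Lemma~\ref{eig2}, and the multiplicity is transferred intact in both directions, giving $m_{{\cal L}_n}(g_1(\lambda))=m_{{\cal L}_n}(g_2(\lambda))=m_{{\cal L}_{n-1}}(\lambda)$.

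I do not expect a genuine obstacle here: the lemma is essentially a cosmetic reformulation of Lemma~\ref{eig2}, and all the substance is the bookkeeping of the substitution $\mu\mapsto 1-\mu$ together with the verification that passing from $P_k$ to ${\cal L}_k=I-P_k$ preserves eigenspaces and multiplicities. In the spirit of the proof the authors gave for Lemma~\ref{eig2}, one could even state that it follows directly from Lemma~\ref{eig1} (equivalently Lemma~\ref{eig2}) and the identity ${\cal L}_n=I-P_n$, with the computation of $g_1,g_2$ from $f_1,f_2$ as the only explicit step.
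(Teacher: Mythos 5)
Your proof is correct and follows essentially the same route as the paper, which states this lemma as an immediate consequence of Lemmas~\ref{eig1} and~\ref{eig2} via the substitution $\lambda=1-\mu$ in ${\cal L}_n=I-P_n$. Your version simply makes explicit the bookkeeping (the correspondence $\lambda\neq 2\Leftrightarrow\mu\neq -1$, the identification of $1-f_{1,2}(\mu)$ with $g_{2,1}(\lambda)$, and the preservation of eigenspaces) that the paper leaves implicit.
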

$\Box$
\\

\begin{definition}
	Let $U=\{u_1,u_2,\cdots,u_k\}$ be any finite multiset of real number where $|u_i|\leqslant 1$ for $i\in [1,k]$. The multisets $\mathcal{R}^{-1 }(U)$ and $\mathcal{Q}^{-1 }(U)$ are defined as
	\begin{equation}
			\mathcal{R}^{-1}(U)=\{f_1(u_1),f_2(u_1),f_1(u_2),f_2(u_2),\cdots,f_1(u_k),f_2(u_k)\};
	\end{equation}
	\begin{equation}
			\mathcal{Q}^{-1}(U)=\{g_1(u_1),g_2(u_1),g_1(u_2),g_2(u_2),\cdots,g_1(u_k),g_2(u_k)\}.
	\end{equation}
\end{definition}

Our main result in this section is the following theorem.

\begin{lemma}\label{theo1}
The  spectrum $\sigma'_n$ of $P_n$,  for $n>0$, is:
 \begin{equation}
 	 \sigma'_n=\mathcal{R}^{-1}\left(\sigma'_{n-1}\setminus \{-1\}\right)\cup\{\underbrace{0, \cdots, 0}_\text{$ m_{P_n}(0)$}\}
 \end{equation}
When $n>1$, $ m_{P_n}(0)=r+1$. When $n=1$, if G contains any odd-length cycle then
$ m_{P_n}(0)=r-1$, otherwise $ m_{P_n}(0)=r+1$, where $r$ is the circuit rank of $G$.
\end{lemma}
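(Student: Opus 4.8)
The plan is to upgrade Lemmas~\ref{eig1} and~\ref{eig2} into an exact multiplicity‑preserving correspondence between the nonzero part of $\sigma'_n$ and $\sigma'_{n-1}\setminus\{-1\}$, and then pin down $m_{P_n}(0)$ by a dimension count, splitting off the base case $n=1$ at the end.

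\textbf{Step 1: the nonzero eigenvalues of $P_n$ are exactly $\mathcal{R}^{-1}(\sigma'_{n-1}\setminus\{-1\})$.} For the forward direction I would invoke Lemma~\ref{eig1}: if $\mu$ is a nonzero eigenvalue of $P_n$ then $R(\mu)=2\mu^2-1$ is an eigenvalue of $P_{n-1}$ with $m_{P_{n-1}}(R(\mu))=m_{P_n}(\mu)$; since $R(\mu)=-1$ would force $\mu=0$, we have $R(\mu)\neq-1$, so $\mu\in\{f_1(R(\mu)),f_2(R(\mu))\}$ and $\mu$ occurs in $\mathcal{R}^{-1}(\sigma'_{n-1}\setminus\{-1\})$ with the right multiplicity. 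For the reverse direction I would use Lemma~\ref{eig2}: each eigenvalue $\nu\neq-1$ of $P_{n-1}$ of multiplicity $m$ yields $f_1(\nu)=\sqrt{(\nu+1)/2}$ and $f_2(\nu)=-\sqrt{(\nu+1)/2}$ as eigenvalues of $P_n$, each of multiplicity exactly $m$; because $\nu\neq-1$ gives $(\nu+1)/2>0$, these two values are nonzero and of opposite sign, and since $f_1,f_2$ are injective, distinct $\nu$'s produce disjoint pairs. Hence the $2\bigl(N_{n-1}-m_{P_{n-1}}(-1)\bigr)$ entries of $\mathcal{R}^{-1}(\sigma'_{n-1}\setminus\{-1\})$ are precisely the nonzero eigenvalues of $P_n$ counted with multiplicity, and none of them equals $0$ — this is the content of Remark~\ref{remark1}, made quantitative.

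\textbf{Step 2: the dimension count.} Since $P_n$ is a real symmetric matrix of order $N_n$, the multiset $\sigma'_n$ has $N_n$ entries, so from Step~1,
\[
 m_{P_n}(0)=N_n-2\bigl(N_{n-1}-m_{P_{n-1}}(-1)\bigr).
\]
Substituting $N_n=N_{n-1}+E_{n-1}$ from~\eqref{ordersizeSn} and using the invariance of the circuit rank (so that $E_{n-1}-N_{n-1}=r-1$) gives $m_{P_n}(0)=r-1+2\,m_{P_{n-1}}(-1)$. The remaining task is to evaluate $m_{P_{n-1}}(-1)$. When $n>1$ we have $n-1\geqslant1$, so $s^{n-1}(G)$ is bipartite and, as recorded after Remark~\ref{remark1}, the eigenvalue $-1$ of $P_{n-1}$ is simple, i.e.\ $m_{P_{n-1}}(-1)=1$; this yields $m_{P_n}(0)=r+1$. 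When $n=1$ we instead need $m_{P_0}(-1)$: for a connected graph, $-1$ is an eigenvalue of the normalized adjacency (Markov) matrix iff the graph is bipartite, with multiplicity $1$ in that case and $0$ otherwise. Thus if $G$ contains an odd‑length cycle then $m_{P_0}(-1)=0$ and $m_{P_1}(0)=r-1$, while if $G$ is bipartite then $m_{P_0}(-1)=1$ and $m_{P_1}(0)=r+1$, which is the claimed dichotomy.

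\textbf{Main obstacle.} The only step that requires genuine care is Step~1: one must check that the correspondence is multiplicity‑exact in \emph{both} directions and that no hidden zero eigenvalue is created by $\mathcal{R}^{-1}$, since otherwise the count $2\bigl(N_{n-1}-m_{P_{n-1}}(-1)\bigr)$ of nonzero eigenvalues would be wrong and the formula for $m_{P_n}(0)$ would fail. Fortunately Lemmas~\ref{eig1} and~\ref{eig2} are exactly tailored to supply this — the equality $m_{P_{n-1}}(R(\mu))=m_{P_n}(\mu)$ in Lemma~\ref{eig1} (not merely an inequality) is what closes the argument — after which Step~2 is routine arithmetic together with the already‑cited facts about bipartiteness and the spectrum of Markov matrices.
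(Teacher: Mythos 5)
Your proposal is correct and follows essentially the same route as the paper: both arguments use Lemmas~\ref{eig1} and~\ref{eig2} together with Remark~\ref{remark1} to count the nonzero eigenvalues of $P_n$ as $2\bigl(N_{n-1}-m_{P_{n-1}}(-1)\bigr)$, deduce $m_{P_n}(0)=N_n-2\bigl(N_{n-1}-m_{P_{n-1}}(-1)\bigr)=r-1+2\,m_{P_{n-1}}(-1)$ by a dimension count, and then evaluate $m_{P_{n-1}}(-1)$ via bipartiteness. Your Step~1 merely makes explicit the bidirectional multiplicity bookkeeping that the paper leaves implicit in the remark.
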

\begin{proof}
Combining Lemma~\ref{eig1}, Lemma~\ref{eig2} and considering Remark~\ref{remark1}, the multiplicity of the eigenvalue $0$ of $P_n$ can be determined indirectly:
\begin{equation}
\begin{split}
m_{P_n}(0)&=N_n-2(N_{n-1}-m_{P_{n-1}}(-1)) \\
&=E_0-N_0+2\cdot m_{P_{n-1}}(-1).
\end{split}
\end{equation}

Based on the previous results, it is obvious that $m_{P_{n-1}}(-1)=0$ if and only if $n=1$ and G contains an odd-length cycle, otherwise $m_{P_{n-1}}(-1)=1$, which completes the proof.
\end{proof}

This result allows us to state the main result of this section.

\begin{theo}\label{theo1}
The spectra $\sigma_n$ of ${\cal L}_{n}$ is obtained from $\sigma_{n-1}$ of ${\cal L}_{n-1}$ as:
\begin{equation}
\sigma_n=\mathcal{Q}^{-1}\left(\sigma_{n-1}\setminus \{2\}\right)\cup\{\underbrace{1, \cdots, 1}_\text{$m_{{\cal L}_n}(1)$}\}
\end{equation}
for $n>0$ and where  $m_{{\cal L}_n}(1)=m_{P_n}(0)$ is the multiplicity of the eigenvalue $1$ of ${\cal L}_n$.
\end{theo}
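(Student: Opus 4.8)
The plan is to obtain Theorem~\ref{theo1} as a direct translation of the preceding lemma on the spectrum $\sigma'_n$ of $P_n$, using the identity ${\cal L}_n=I-P_n$: a real number $\lambda$ is an eigenvalue of ${\cal L}_n$ of multiplicity $m$ if and only if $1-\lambda$ is an eigenvalue of $P_n$ of multiplicity $m$. So I would apply the involution $x\mapsto 1-x$ elementwise to the multiset identity $\sigma'_n=\mathcal{R}^{-1}\!\left(\sigma'_{n-1}\setminus\{-1\}\right)\cup\{0,\dots,0\}$ and simply check that every piece lands where the theorem claims.

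Concretely, I would first verify that the two branch maps correspond under $\mu=1-\lambda$. Since $f_1(x)=\sqrt{(x+1)/2}$ and $g_2(x)=1-\sqrt{1-x/2}$, one has $1-f_1(1-\lambda)=1-\sqrt{(2-\lambda)/2}=1-\sqrt{1-\lambda/2}=g_2(\lambda)$, and likewise $1-f_2(1-\lambda)=g_1(\lambda)$; this is exactly the compatibility already used to pass from Lemma~\ref{eig1} to Lemma~\ref{eigLiter}. Hence, for an eigenvalue $\lambda\neq 2$ of ${\cal L}_{n-1}$, the two eigenvalues $f_1(1-\lambda),f_2(1-\lambda)$ of $P_n$ produced by $\mathcal{R}^{-1}$ become, under $x\mapsto 1-x$, precisely $g_1(\lambda),g_2(\lambda)$; and removing $-1$ from $\sigma'_{n-1}$ corresponds to removing $2$ from $\sigma_{n-1}$. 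Therefore the image of $\mathcal{R}^{-1}\!\left(\sigma'_{n-1}\setminus\{-1\}\right)$ is exactly $\mathcal{Q}^{-1}\!\left(\sigma_{n-1}\setminus\{2\}\right)$, matching multiplicities by Lemma~\ref{eigLiter}. For the flat part, the $m_{P_n}(0)$ copies of the eigenvalue $0$ of $P_n$ map to $m_{P_n}(0)$ copies of the eigenvalue $1$ of ${\cal L}_n$, which gives both the appearance of $\{1,\dots,1\}$ in $\sigma_n$ and the identity $m_{{\cal L}_n}(1)=m_{P_n}(0)$ (whose explicit value is already recorded in the preceding lemma). Assembling the two parts yields the stated decomposition of $\sigma_n$.

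As an alternative, self-contained route (the one I would actually write out), I would argue directly on ${\cal L}_n$: Lemma~\ref{eigLiter} exhibits $\mathcal{Q}^{-1}\!\left(\sigma_{n-1}\setminus\{2\}\right)$ as a sub-multiset of $\sigma_n$ of size $2\bigl(N_{n-1}-m_{{\cal L}_{n-1}}(2)\bigr)$, counted with multiplicity, where $m_{{\cal L}_{n-1}}(2)=m_{P_{n-1}}(-1)$; the associated lifted eigenvectors are linearly independent by the reasoning behind Remark~\ref{remark1}, and they span the sum of the eigenspaces of ${\cal L}_n$ for eigenvalues different from $1$ (equivalently, eigenspaces of $P_n$ for nonzero eigenvalues). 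Using $N_n=N_{n-1}+E_{n-1}$ and~(\ref{ordersizeSn}), the remaining $N_n-2\bigl(N_{n-1}-m_{P_{n-1}}(-1)\bigr)=E_0-N_0+2\,m_{P_{n-1}}(-1)$ eigenvalues must therefore all be $1$, i.e.\ they form the kernel of $P_n$, whose dimension is $m_{P_n}(0)$; this gives $m_{{\cal L}_n}(1)=m_{P_n}(0)$ and closes the count.

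I do not expect a genuine obstacle here: the content of the theorem is entirely carried by the lemmas already in place. The only point requiring care is the multiset bookkeeping — checking that each eigenvalue of ${\cal L}_{n-1}$ other than $2$ contributes exactly two eigenvalues of ${\cal L}_n$ with no collisions or overcounting, and that the dimension count $2\bigl(N_{n-1}-m_{{\cal L}_{n-1}}(2)\bigr)+m_{P_n}(0)=N_n$ balances, which is exactly where~(\ref{ordersizeSn}) and the formula for $m_{P_n}(0)$ are consumed.
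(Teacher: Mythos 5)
Your proposal is correct and follows essentially the same route as the paper, which states the theorem as an immediate corollary of the preceding lemma on $\sigma'_n$ via the correspondence $\lambda=1-\mu$ between eigenvalues of ${\cal L}_n$ and $P_n$ (your verification that $1-f_1(1-\lambda)=g_2(\lambda)$ and $1-f_2(1-\lambda)=g_1(\lambda)$, and that removing $-1$ from $\sigma'_{n-1}$ matches removing $2$ from $\sigma_{n-1}$, is exactly the translation the paper leaves implicit). The alternative dimension-counting argument you sketch is a sound, slightly more self-contained variant of the same bookkeeping, not a genuinely different method.
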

$\Box$

Due to the particularity of the eigenvalue $1$ of ${\cal L}_n$, we call it the exceptional eigenvalue~\cite{bajorin2008vibration} of the family of matrices $\{{\cal L}_n\}$ whose spectra show self-similar characteristics.  For many other family of graphs~\cite{bajorin2008vibration, teplyaev1998spectral} with a similar self-similar property with respect to the spectra of their Markov matrices, the multiplicity of exceptional eigenvalues grows fast as $n$ increases. However, for the normalized Laplacian  of subdivision graphs $\{s^n(G)\}$, the multiplicity of the only exceptional eigenvalue $1$ is always  $r+1$ for $n>1$.


\section{Application of the spectrum of subdivision graph} \label{app}

In this section we use  the spectra of ${\cal L}_n$, the normalized  Laplacian  of the subdivision graph $s^n(G)$, to compute some relevant invariants related to the structure of $s^n(G)$.
Thus, we give closed formulas for the multiplicative degree-Kirchhoff index, Kemeny's constant and the number of spanning trees of $s^n(G)$. These results depend only  on $n$ and some invariants of the original graph $G$.

\subsection{Multiplicative degree-Kirchhoff index}

\begin{theorem}\label{stan}
	The multiplicative degree-Kirchhoff indices of $s^n(G)$ and $s^{n-1}(G)$ are related as follows, for any $n>0$:
	\begin{equation}\label{K}
			{K\! f}^*(s^n(G))=8{K\! f}^*(s^{n-1}(G))+2^n(2r-1)E_0.
	\end{equation}
	Therefore, the general expression for ${K\! f}^*(s^n(G))$ is
	\begin{equation}
			{K\! f}^*(s^n(G))=8^n{K\! f}^*(G)+\frac{8^n-2^n}{3}(2r-1)E_0.
	\end{equation}
\end{theorem}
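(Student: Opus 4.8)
The plan is to use the recursive description of the normalized Laplacian spectrum from Theorem~\ref{theo1} together with the spectral formula \eqref{kirchhoff} for ${K\! f}^*$. First I would write
\begin{equation*}
{K\! f}^*(s^n(G))=2E_n\sum_{\lambda\in\sigma_n\setminus\{0\}}\frac{1}{\lambda},
\end{equation*}
and split $\sigma_n$ according to Theorem~\ref{theo1}: the ``inherited'' part $\mathcal{Q}^{-1}(\sigma_{n-1}\setminus\{2\})$ and the exceptional part consisting of $m_{{\cal L}_n}(1)=r+1$ copies of the eigenvalue $1$ (using $n>1$; the case $n=1$ will need a small separate check because $m_{{\cal L}_1}(1)$ may be $r-1$). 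The exceptional eigenvalues contribute exactly $r+1$ to $\sum 1/\lambda$. For the inherited part, each eigenvalue $\lambda\neq 2$ of ${\cal L}_{n-1}$ produces the pair $g_1(\lambda)=1+\sqrt{1-\lambda/2}$ and $g_2(\lambda)=1-\sqrt{1-\lambda/2}$, so the key elementary identity is
\begin{equation*}
\frac{1}{g_1(\lambda)}+\frac{1}{g_2(\lambda)}=\frac{g_1(\lambda)+g_2(\lambda)}{g_1(\lambda)g_2(\lambda)}=\frac{2}{1-(1-\lambda/2)}=\frac{4}{\lambda}.
\end{equation*}
Hence each $\lambda$ in $\sigma_{n-1}\setminus\{0,2\}$ contributes $4/\lambda$ to the new sum, while the eigenvalue $\lambda=0$ of ${\cal L}_{n-1}$ (which we excluded) would have been sent to $g_1(0)=2$ and $g_2(0)=0$ — note the single new $0$ coming from $g_2(0)$ is already accounted for in the exceptional block count, so I must be careful about bookkeeping here and treat the zero eigenvalue of ${\cal L}_{n-1}$ explicitly rather than folding it into $\mathcal{Q}^{-1}$.

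Carrying this out, I would obtain
\begin{equation*}
\sum_{\lambda\in\sigma_n\setminus\{0\}}\frac{1}{\lambda}=4\sum_{\lambda\in\sigma_{n-1}\setminus\{0\}}\frac{1}{\lambda}+(r+1)-c,
\end{equation*}
where $c$ is the correction from the eigenvalue $2\in\sigma_{n-1}$ (if present, with its multiplicity) not being fed into $\mathcal{Q}^{-1}$, plus the adjustment for the $g_1(0)=2$ term. I would then substitute $E_n=2E_{n-1}$ and multiply by $2E_n$ to get ${K\! f}^*(s^n(G))=8\,{K\! f}^*(s^{n-1}(G))+2E_n\bigl((r+1)-c\bigr)$, and the claim \eqref{K} says the additive term equals $2^n(2r-1)E_0=2E_n(2r-1)/2\cdot\ldots$; matching constants forces $(r+1)-c$ to simplify to the stated quantity, which pins down exactly what $c$ must be. The second formula then follows by unrolling the recursion: iterating ${K\! f}^*(s^n(G))=8\,{K\! f}^*(s^{n-1}(G))+2^n(2r-1)E_0$ gives $8^n{K\! f}^*(G)+(2r-1)E_0\sum_{j=1}^{n}8^{n-j}2^{j}$, and the geometric sum $\sum_{j=1}^n 8^{n-j}2^j=2^n\sum_{j=1}^n 4^{n-j}=2^n\cdot\frac{4^n-1}{3}=\frac{8^n-2^n}{3}$ yields the closed form.

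The main obstacle I anticipate is the careful handling of the eigenvalue $2$ of ${\cal L}_{n-1}$ and the eigenvalue $0$ of ${\cal L}_{n-1}$ in the recursion: the map $\mathcal{Q}^{-1}$ is only applied to $\sigma_{n-1}\setminus\{2\}$, so if $2$ is actually an eigenvalue of ${\cal L}_{n-1}$ (equivalently $-1\in\sigma'_{n-1}$, which holds for $n-1\geq 1$ with multiplicity $1$, and for $n-1=0$ depending on whether $G$ is bipartite) its reciprocal drops out of the sum and is replaced by the contributions of the fresh exceptional $1$'s — and one must verify the multiplicity bookkeeping $m_{{\cal L}_n}(1)=r+1$ from Lemma~\ref{theo1} is consistent with this. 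Getting the constant $2r-1$ rather than, say, $2r+1$ hinges entirely on this edge-case accounting, so I would do the $n=1$ base case by hand (where $m_{{\cal L}_1}(1)$ may differ) to check the recursion's validity at the start and confirm the closed-form constant is correct for all $n\geq 1$.
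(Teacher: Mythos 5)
Your proposal follows essentially the same route as the paper's own proof: the spectral formula \eqref{kirchhoff}, the recursion $\sigma_n=\mathcal{Q}^{-1}(\sigma_{n-1}\setminus\{2\})\cup\{1,\dots,1\}$, the key identity $\frac{1}{g_1(\lambda)}+\frac{1}{g_2(\lambda)}=\frac{4}{\lambda}$, and a separate check of the $n=1$ case when $G$ has an odd cycle. The one step you leave to back-substitution --- the value of your correction $c$ --- is easily computed directly as $c=2\,m_{{\cal L}_{n-1}}(2)-\tfrac{1}{2}$ (the $-\tfrac{1}{2}$ coming from $g_1(0)=2$), so that $(r+1)-c=r-\tfrac{1}{2}$ when $2\in\sigma_{n-1}$ and likewise $(r-1)+\tfrac{1}{2}=r-\tfrac{1}{2}$ in the $n=1$ non-bipartite case, giving the additive term $2E_n\bigl(r-\tfrac{1}{2}\bigr)=2^n(2r-1)E_0$ in both cases.
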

\begin{proof}
From Eq.~\eqref{kirchhoff} and Corollary~\ref{theo1}, the relation between ${K\! f}^*(s^n(G))$ and ${K\! f}^*(s^{n-1}(G))$ can be expressed as:
\begin{footnotesize}
\begin{equation}\label{K1}
\begin{split}
{K\! f}^*(s^n(G))&=2E_n\left(\frac{1}{\lambda^{(n)}_{N_n}}+\frac{m_{{\cal L}_n}(0)}{1-0}+\sum_{k=2}^{N_{n-1}-1}\left(\frac{1}{g_1(\lambda_k^{(n-1)})}+\frac{1}{g_2(\lambda_k^{(n-1)})}\right)\right)\\
	&=2E_n\left(\frac{1}{2}+(r+1)+4\sum_{k=2}^{N_{n-1}-1}\frac{1}{\lambda_k^{(n-1)}}\right)\\
	&=2E_n\left(\frac{1}{2}+(r+1)+4\left(\frac{K(s^{n-1}(G))}{2E_{n-1}}-\frac{1}{2}\right)\right)\\
	&=8{K\! f}^*(s^{n-1}(G))+2^n(2r-1)E_0,
\end{split}
	\end{equation}
\end{footnotesize}
provided that $m_{{\cal L}_{n-1}}(-1)=1$, where $\lambda_k^{(n)}$ represents the eigenvalue of ${\cal L}_n$.

When $n=1$ and $m_{{\cal L}_n}{0}(-1)=0$,  we obtain:
 \begin{footnotesize}
\begin{equation}\label{K2}
\begin{split}
	{K\! f}^*(s(G))&=2E_1\left(\frac{1}{\lambda^{}_{N_1}}+\frac{m_{{\cal L}_1}(0)}{1-0}+\sum_{k=2}^{N_{0}}\left(\frac{1}{g_1(\lambda_k)}+\frac{1}{g_2(\lambda_k)}\right)\right)\\
	&=2E_1\left(\frac{1}{2}+(r-1)+4\sum_{k=2}^{N_{0}}\frac{1}{\lambda_k}\right)\\
	&=2E_1\left(\frac{1}{2}+(r-1)+4\cdot\frac{K(G)}{2E_0}\right)\\
	&=8{K\! f}^*(G)+2(2r-1)E_0.
\end{split}
\end{equation}
\end{footnotesize}
The similarity of Eq.~\eqref{K1} and Eq.~\eqref{K2} indicates that Eq.~\eqref{K} holds for any simple connected graph $G$ for $n>0$.

Transforming Eq.~\eqref{K} into
\begin{equation}
	{K\! f}^*(s^n(G))+\frac{2^n(2r-1)E_0}{3}=8\left(K(s^{n-1}(G))+\frac{2^{n-1}(2r-1)E_0}{3}\right)
\end{equation}
yields
\begin{equation}
	{K\! f}^*(s^n(G))=8^n{K\! f}^*(G)+\frac{8^n-2^n}{3}(2r-1)E_0.
\end{equation}
\end{proof}

We note here that this expression has been also obtained  recently by Yang and Klein~\cite{yang2015resistance} by using a counting methodology  not related with spectral techniques.
Our result confirms both their calculation and the usefulness of the concise spectral methods described here.

\subsection{Kemeny's constant}
\begin{theorem}\label{kenny}
	The Kemeny's constant for random walks on $s^n(G)$ can be obtained from $K(s^{n-1}(G))$ through
	\begin{equation}
			K(s^n(G))=4K\left(s^{n-1}(G)\right)+r-\frac{1}{2}.
	\end{equation}
	The general expression is
	\begin{equation}
			K(s^n(G))=4^nK(G)+\frac{4^n-1}{3}\left(r-\frac{1}{2}\right).
	\end{equation}
\end{theorem}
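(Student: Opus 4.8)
The plan is to mirror the argument just used for the multiplicative degree-Kirchhoff index, now applied to the identity $K(s^n(G))=\sum_{k=2}^{N_n}1/\lambda_k^{(n)}$ of Eq.~\eqref{kem}. First I would invoke the spectral recursion of Theorem~\ref{theo1} and sort the multiset $\sigma_n$ by its origin under $\mathcal{Q}^{-1}$. The eigenvalue $0$ of $\mathcal{L}_{n-1}$ produces, via $g_1$ and $g_2$, one eigenvalue $2$ and one eigenvalue $0$ of $\mathcal{L}_n$; the latter is $\lambda_1^{(n)}$ and is dropped from the sum, while the former contributes $\tfrac12$. The exceptional eigenvalue $1$ contributes $m_{\mathcal{L}_n}(1)$. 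Every remaining $\lambda\in\sigma_{n-1}\setminus\{0,2\}$ yields the pair $g_1(\lambda),g_2(\lambda)$, and the elementary identities $g_1(\lambda)+g_2(\lambda)=2$ and $g_1(\lambda)g_2(\lambda)=\lambda/2$ give $\tfrac{1}{g_1(\lambda)}+\tfrac{1}{g_2(\lambda)}=\tfrac{4}{\lambda}$.

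Collecting these contributions gives $K(s^n(G))=\tfrac12+m_{\mathcal{L}_n}(1)+4\sum_{k=2}^{N_{n-1}-1}1/\lambda_k^{(n-1)}$ whenever $\mathcal{L}_{n-1}$ carries the eigenvalue $2$ (i.e.\ for all $n>1$, and for $n=1$ when $G$ is bipartite), and the analogous expression with the sum running up to $N_0$ in the remaining case $n=1$ with $G$ containing an odd cycle, where $\mathcal{L}_0$ has no eigenvalue $2$ and $m_{\mathcal{L}_1}(1)=r-1$. In the first case I would substitute $\sum_{k=2}^{N_{n-1}-1}1/\lambda_k^{(n-1)}=K(s^{n-1}(G))-\tfrac12$, using $\lambda_{N_{n-1}}^{(n-1)}=2$ and $m_{\mathcal{L}_n}(1)=r+1$; in the second case $\sum_{k=2}^{N_0}1/\lambda_k=K(G)$ and $m_{\mathcal{L}_1}(1)=r-1$. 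A short simplification shows both cases collapse to the single recursion $K(s^n(G))=4K(s^{n-1}(G))+r-\tfrac12$, valid for every $n>0$.

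The closed form then follows by unrolling this first-order linear recurrence: with $c=r-\tfrac12$ one obtains $K(s^n(G))=4^nK(G)+c\sum_{j=0}^{n-1}4^{j}=4^nK(G)+\tfrac{4^n-1}{3}\bigl(r-\tfrac12\bigr)$. I do not expect a genuine obstacle here; the only care needed is bookkeeping — tracking that the lone eigenvalue $0$ of $\mathcal{L}_{n-1}$ splits into a $0$ (discarded) and a $2$ (worth $\tfrac12$), and verifying that the $n=1$ odd-cycle branch, in which both $m_{\mathcal{L}_1}(1)$ and the range of the surviving sum change, still reproduces exactly the same recursion, so that a uniform statement for all $n>0$ is legitimate.
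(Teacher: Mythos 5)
Your computation is correct: the identities $g_1(\lambda)+g_2(\lambda)=2$ and $g_1(\lambda)g_2(\lambda)=\lambda/2$ give $1/g_1(\lambda)+1/g_2(\lambda)=4/\lambda$, the eigenvalue $0$ of ${\cal L}_{n-1}$ indeed splits into a discarded $0$ and a $2$ worth $\tfrac12$, and both of your branches (the generic one with $m_{{\cal L}_n}(1)=r+1$ and a surviving eigenvalue $2$ in $\sigma_{n-1}$, and the $n=1$ odd-cycle branch with $m_{{\cal L}_1}(1)=r-1$ and no eigenvalue $2$ in $\sigma_0$) collapse to $K(s^n(G))=4K(s^{n-1}(G))+r-\tfrac12$, after which the closed form follows by unrolling. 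However, your route differs from the paper's: the paper proves Theorem~\ref{kenny} in one line by observing that Eq.~\eqref{kirchhoff} and Eq.~\eqref{kem} give ${K\! f}^*(s^n(G))=2E_nK(s^n(G))$, so dividing the already-established recursion ${K\! f}^*(s^n(G))=8{K\! f}^*(s^{n-1}(G))+2^n(2r-1)E_0$ of Theorem~\ref{stan} by $2E_n=4\cdot 2E_{n-1}/2$ immediately yields $4K(s^{n-1}(G))+(2r-1)/2$. What you have written is essentially a re-run, for $K$ instead of ${K\! f}^*$, of the spectral bookkeeping the paper already carries out in the proof of Theorem~\ref{stan}. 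Your version is self-contained and makes the case analysis explicit once more, which is a useful check; the paper's version is shorter and avoids duplicating the eigenvalue accounting, at the cost of leaning entirely on the Kirchhoff-index theorem. Both are valid, and no step in your argument fails.
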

\begin{proof}
Theorem~\ref{kenny} is an obvious consequence of Theorem~\ref{stan} considering Eq.~\eqref{kem}.
\end{proof}

\subsection{Spanning trees}

\begin{theorem}\label{sptree}
The number of spanning trees of $s^n(G)$ is,  for any $n>0$:
\begin{equation}
N_{\rm st}^{(n)}=2^rN_{\rm st}^{(n-1)}=2^{rn}N_{\rm st}^{(0)}.
\end{equation}
\end{theorem}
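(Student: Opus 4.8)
The plan is to use the spanning-tree formula~\eqref{ST01} together with the iterative description of the normalized Laplacian spectrum given in Corollary~\ref{theo1}. Writing $N_{\rm st}^{(n)}$ via~\eqref{ST01}, three ingredients enter: the product of all vertex degrees $\prod_{i=1}^{N_n} d_i$, the sum of all degrees $\sum_{i=1}^{N_n} d_i = 2E_n$, and the product of nonzero eigenvalues $\prod_{i=2}^{N_n}\lambda_i^{(n)}$. First I would handle the degree terms: every vertex of ${V}_{\rm new}^n$ has degree $2$, and there are $E_{n-1}=2^{n-1}E_0$ of them, while the vertices of ${V}_{\rm old}^n$ are exactly the vertices of $s^{n-1}(G)$ with unchanged degrees. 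Hence $\prod_{i=1}^{N_n} d_i = 2^{E_{n-1}}\prod_{i=1}^{N_{n-1}} d_i$, and $\sum_{i=1}^{N_n} d_i = 2E_n = 2\cdot 2 E_{n-1} = 2\sum_{i=1}^{N_{n-1}}d_i$.

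Next I would compute the ratio of eigenvalue products using Theorem~\ref{theo1}. Each eigenvalue $\lambda\neq 2$ of ${\cal L}_{n-1}$ contributes the pair $g_1(\lambda)g_2(\lambda) = \bigl(1+\sqrt{1-\lambda/2}\bigr)\bigl(1-\sqrt{1-\lambda/2}\bigr) = 1-(1-\lambda/2) = \lambda/2$ to $\prod_i \lambda_i^{(n)}$. If $2\in\sigma_{n-1}$ with some multiplicity $m$, those eigenvalues instead produce eigenvalues $g_1(2)=g_2(2)=1$ of ${\cal L}_n$, contributing a factor $1$; and the exceptional eigenvalue $1$ appears with multiplicity $m_{{\cal L}_n}(1)=r+1$ for $n>1$, again contributing $1$ each. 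The one eigenvalue that must be tracked carefully is the zero eigenvalue $\lambda_1^{(n-1)}=0$, which is excluded from the product in~\eqref{ST01}; under $g_1,g_2$ it generates $g_2(0)=0$ (also excluded) and $g_1(0)=2$ (which \emph{is} included and equals $2$). So, separating the excluded zero from the product over $k=2,\dots,N_{n-1}$, one gets
\begin{equation*}
\prod_{i=2}^{N_n}\lambda_i^{(n)} = 2\cdot\!\!\prod_{k=2}^{N_{n-1}}\frac{\lambda_k^{(n-1)}}{2} = 2\cdot 2^{-(N_{n-1}-1)}\prod_{i=2}^{N_{n-1}}\lambda_i^{(n-1)} = 2^{2-N_{n-1}}\prod_{i=2}^{N_{n-1}}\lambda_i^{(n-1)},
\end{equation*}
valid for $n>1$ (the case $n=1$ needs the same check with the multiplicity of $1$ in $\sigma_1$ possibly being $r-1$, but since $1$ contributes a factor $1$ to the product this does not affect the eigenvalue-product ratio; one should still verify the bookkeeping of how many eigenvalues there are).

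Assembling the three pieces into~\eqref{ST01} and taking the ratio $N_{\rm st}^{(n)}/N_{\rm st}^{(n-1)}$, the degree product contributes $2^{E_{n-1}}$, the degree sum contributes $1/2$, and the eigenvalue product contributes $2^{2-N_{n-1}}$, so the total is $2^{E_{n-1}+2-N_{n-1}-1}=2^{E_{n-1}-N_{n-1}+1}$. By Lemma~1 (the circuit-rank lemma) this exponent is exactly the circuit rank $r=E_{n-1}-N_{n-1}+1$ of $s^{n-1}(G)$, which equals $r$. Thus $N_{\rm st}^{(n)}=2^r N_{\rm st}^{(n-1)}$, and iterating gives $N_{\rm st}^{(n)}=2^{rn}N_{\rm st}^{(0)}$. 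The main obstacle I anticipate is purely combinatorial bookkeeping: making sure the counts of eigenvalues match ($N_n = 2(N_{n-1}-1)+m_{{\cal L}_n}(1)$, using that $2$ and $-1$ data are consistent with Lemma~\ref{theo1}), and correctly isolating which eigenvalues are excluded from versus included in the product in~\eqref{ST01} — in particular the interplay between the excluded zero eigenvalue and the included eigenvalue $2=g_1(0)$. Once that accounting is pinned down, the rest is the short computation above.
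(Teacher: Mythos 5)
Your proposal is correct and follows essentially the same route as the paper's proof: the ratio $2^{E_{n-1}}$ from the degree product, $1/2$ from the degree sum, and $2^{2-N_{n-1}}$ from the eigenvalue product (numerically identical to the paper's Eqs.~\eqref{st2}--\eqref{st3}), combining to $2^{E_{n-1}-N_{n-1}+1}=2^{r}$. One small caveat: by Theorem~\ref{theo1} the eigenvalue $2$ of $\sigma_{n-1}$ is \emph{removed} rather than mapped to two eigenvalues $1$ (the $r+1$ copies of $1$ are added separately), so your verbal bookkeeping would overcount eigenvalues; but since $g_1(2)g_2(2)=2/2=1$, including the $k=N_{n-1}$ factor in your product is numerically harmless and the power of $2$ you extract agrees exactly with the paper's.
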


\begin{proof}
From  Eq.~\eqref{ST01} and the properties of the subdivision of a graph:
\begin{equation}\label{st1}
\frac{N_{\rm st}^{(n)}}{N_{\rm st}^{(n-1)}}=2^{E_{n-1}-1}\times\tfrac{\prod\limits_{i=2}^{N_n}\lambda_i^{(n)}}{\prod\limits_{i=2}^{N_{n-1}}\lambda_i^{(n-1)}}.
\end{equation}
Here $\lambda_i^{(n)}$ are the eigenvalues of ${\cal L}_n$

Then, by the same techniques used in the previous subsection, we obtain
\begin{equation}\label{st2}
\begin{split}
	\prod\limits_{i=2}^{N_n}\lambda_i^{(n)}&=2\times\prod\limits_{i=2}^{N_{n-1}-1}\Big(g_1\big(\lambda_i^{(n-1)}\big)g_2\big(\lambda_i^{(n-1)}\big)\Big)\\
	&=2\times\prod\limits_{i=2}^{N_{n-1}-1}\frac{\lambda_i^{(n-1)}}{2}\\
	&=\frac{1}{2^{N_{n-1}-2}}\prod\limits_{i=2}^{N_{n-1}}\lambda_i^{(n-1)}
	\end{split}
\end{equation}
if $m_{{\cal L}_{n-1}}(-1)=1$.

When $n=1$ and $m_{{\cal L}_0}(-1)=1$, the relation becomes
\begin{equation}\label{st3}
\begin{split}
	\prod\limits_{i=2}^{N_1}\lambda_i^{}&=2\times\prod\limits_{i=2}^{N_{0}}\Big(g_1\big(\lambda_i^{}\big)g_2\big(\lambda_i^{}\big)\Big)\\
	&=2\times\prod\limits_{i=2}^{N_{0}}\frac{\lambda_i^{}}{2}\\
	&=\frac{1}{2^{N_{0}-2}}\prod\limits_{i=2}^{N_{0}}\lambda_i
	\end{split}
\end{equation}
which coincides with Eq.~\eqref{st2}.

Therefore, the equality
\begin{equation}\label{st4}
N_{\rm st}^{(n)}=2^{E_{n-1}-N_{n-1}+1}\times N_{\rm st}^{(n-1)}=2^rN_{\rm st}^{(n-1)}=2^{rn}N_{\rm st}^{(0)}
\end{equation}
holds for any $n>0$ as the circuit rank $r$ remains unchanged.
\end{proof}

\section{Conclusion}

In this study we have focused on the analytical calculation of $\sigma_n$, the spectra for the normalized Laplacian of iterated subdivisions of any simple connected graph. This was possible through the analysis of the eigenvectors corresponding to adjacent vertices at different iteration steps.  Our methods could be also applied to find the spectra of other  graph families constructed iteratively.
 	
The  simple relationship between the spectrum of the Markov  and the normalized Laplacian matrices of a subdivision graph  facilitates the calculations to obtain the exact distribution and values of all eigenvalues. One particular interesting result is  that the multiplicity of the exceptional eigenvalues of $\sigma_n$  do not increase exponentially with $n$ as in~\cite{xie2015spectrum,zhang2014full} but  is  a constant determined by the value of the circuit rank of the initial graph.
 	
The calculation of the multiplicative degree-Kirchhoff index, Kemeny's constant and the number of spanning trees in section~\ref{app} is also succinct, if we compare it  to other methods, thanks to the knowledge of the full spectrum of $P_n$ or ${\cal L}_n$. The expressions found can be directly used in the analysis of subdivisions of any simple connected graph while only needing minimal structural information from it,  and this is part of the value of our study.
 	
\section*{Acknowledgements}

This work was supported by the National Natural Science
Foundation of China under grant No. 11275049. F.C. was supported by the
Ministerio de Economia y Competitividad (MINECO), Spain, and the European
Regional Development Fund under project  MTM2014-60127.


\end{document}